\newcommand{\sy}{\boldsymbol{\Psi}}
\newcommand{\py}{\boldsymbol{\Phi}}
\newcommand{\T}{\mathbb{T}}
\newcommand{\N}{\mathbb{N}}									
\newcommand{\R}{\mathbb{R}}
\newcommand{\vertiii}[1]{{\left\vert\kern-0.25ex\left\vert\kern-0.25ex\left\vert #1 
    \right\vert\kern-0.25ex\right\vert\kern-0.25ex\right\vert}}
\newcommand{\overbar}[1]{\mkern 1.5mu\overline{\mkern-1.5mu#1\mkern-1.5mu}\mkern 1.5mu}
\newcommand{\inner}[2]{\left\langle #1, #2 \right\rangle}
\newcommand{\norm}[1]{\left\Vert #1 \right\Vert}
\newtheorem{theorem}{Theorem}[section]
\newtheorem{lemma}[theorem]{Lemma}
\newtheorem{definition}[theorem]{Definition}
\begin{document}
	\title{It\^{o}-Stratonovich Conversion in Infinite Dimensions for Unbounded, Time-Dependent, Nonlinear Operators}
	\author{Daniel Goodair \footnote{\'{E}cole Polytechnique F\'{e}d\'{e}rale de Lausanne, daniel.goodair@epfl.ch}}
	\date{\today} 
	\maketitle
\setcitestyle{numbers}	
\thispagestyle{empty}
\begin{abstract}
We prove that a solution, in a variational framework, to the Stratonovich stochastic partial differential equation with noise $\mathcal{G}\left(t, \sy_t\right) \circ d\mathcal{W}_t$ is given by a solution to the It\^{o} equation with It\^{o}-Stratonovich corrector $\frac{1}{2}\sum_{i=1}^\infty D_u\mathcal{G}_i\left(t, \sy_t\right)\left[\mathcal{G}_i(t,\sy_t)\right]dt$. Here $\mathcal{G}_i$ denotes the action of $\mathcal{G}$ on the $i^{\textnormal{th}}$ component of the cylindrical noise, and $D_u\mathcal{G}_i$ its Fr\'{e}chet partial derivative in the Hilbert space for which the It\^{o} form is satisfied. The noise operator $\mathcal{G}$ may be time-dependent, nonlinear, and unbounded in the sense of differential operators; in the latter case, one must pass to a larger space in order to solve the Stratonovich equation. Our proof relies on martingale techniques, and the results apply to fluid equations with time-dependent and nonlinear transport noise.

\end{abstract}

    
\tableofcontents
\textcolor{white}{Hello}
\thispagestyle{empty}
\newpage

\setcounter{page}{1}

\section{Introduction} \label{section introduction}

Whilst Stratonovich stochastic partial differential equations (SPDEs) arise plentifully from various physical principles, the Stratonovich integral lacks key favourable properties of its It\^{o} counterpart. Therefore, when tasked with obtaining a solution to a Stratonovich SPDE, one would rather work with a corrected It\^{o} equation whose solution is known to solve the original Stratonovich SPDE. The goal of this paper is to rigorously provide such a framework, applicable to noise operators which may be nonlinear, time-dependent, and mapping to a larger Hilbert Space. In analogy with classical SDE theory, the It\^{o}-Stratonovich corrector is given by the Fr\'{e}chet partial derivative of the noise operator acting on itself.

\subsection{Main Result}
We give the set-up and statement of the main result. We fix a time horizon $[0,T]$ for $T > 0$, a filtered probability space $(\Omega,\mathcal{F},(\mathcal{F}_t), \mathbbm{P})$ satisfying the usual conditions of completeness and right continuity, supporting a Cylindrical Brownian Motion $\mathcal{W}$ over some separable Hilbert Space $\mathfrak{U}$ with orthonormal basis $(e_i)$. Let $$V \hookrightarrow H \hookrightarrow U \hookrightarrow X$$ be a quartet of embedded separable Hilbert Spaces, and introduce the measurable mapping
$$\mathcal{A}: [0,T] \times V \rightarrow U$$
whereby there exists constants $c,p$ such that for all $t \in [0,T]$ and $\phi \in V$,
$$\norm{\mathcal{A}(t,\phi)}_U \leq c\left(1 + \norm{\phi}_H^p\right)\left(1 + \norm{\phi}_V^2\right).$$ Furthermore we understand $\mathcal{G}$ as a measurable mapping
\begin{align*}
   \mathcal{G}:[0,T] \times U \rightarrow \mathscr{L}^2(\mathfrak{U};X), \qquad   \mathcal{G}|_H: [0,T] \times H \rightarrow \mathscr{L}^2(\mathfrak{U};U), \qquad \mathcal{G}|_V: [0,T] \times V \rightarrow \mathscr{L}^2(\mathfrak{U};H)
\end{align*}
where $\mathscr{L}^2\left(\mathfrak{U};X \right)$ denotes the space of Hilbert-Schmidt Operators from $\mathfrak{U}$ into $X$. In fact we define $\mathcal{G}$ over $\mathfrak{U}$ by its action on the basis vectors $\mathcal{G}(s, \phi, e_i) = \mathcal{G}_i(s, \phi)$. We assume that for each $i$, $\mathcal{G}_i:[0,T] \times U \rightarrow X$ is continuous with Fr\'{e}chet partial derivatives $\partial_t\mathcal{G}_i$, $D_u\mathcal{G}_i$, $D_{uu}\mathcal{G}_i$ continuous and bounded on bounded subsets of $[0,T] \times U$. Moreover, let us assume that there exists constants $c_i, q$ such that for all $t \in [0,T]$, $\phi \in V$, $\psi \in H$ that
$$\norm{\mathcal{G}_i(t,\phi)}_H \leq c_i\left(1 + \norm{\phi}_V\right), \qquad \norm{D_u\mathcal{G}_i(t,\psi)}_{\mathscr{L}(H;U)} \leq c_i\left(1 + \norm{\psi}_H^q\right), \qquad \sum_{i=1}^\infty c_i^2 < \infty$$
where $\mathscr{L}(H;U)$ denotes the space of bounded linear operators from $H$ into $U$, equipped with operator norm $\norm{\cdot}_{\mathscr{L}(H;U)}$. Under these assumptions, we can now state the main theorem.

\begin{theorem} \label{main theorem}
Let $\sy_0:\Omega \rightarrow H$ be $\mathcal{F}_0-$measurable, alongside a pair $(\sy,\tau)$ comprised of a stopping time $\tau \in (0,T]$ $\mathbbm{P}-a.s.$ and a process $\sy$ such that for $\mathbbm{P}-a.e.$ $\omega$, $\sy_{\cdot}(\omega) \in C\left([0,T];H\right)$ and $\sy_{\cdot}(\omega)\mathbbm{1}_{\cdot \leq \tau(\omega)} \in L^2\left([0,T];V\right)$ with $\sy_{\cdot}\mathbbm{1}_{\cdot \leq \tau}$ progressively measurable\footnote{We really mean a version of the process, see [\cite{goodair2024stochastic}] Remark 3.1.} in $V$, satisfying the identity
\begin{equation} \label{converted identity}    \sy_{t} = \sy_0 + \int_0^{t\wedge \tau} \left(\mathcal{A}\left(s,\sy_s\right) + \frac{1}{2}\sum_{i=1}^\infty D_u\mathcal{G}_i\left(s, \sy_s\right)\left[\mathcal{G}_i(s,\sy_s)\right]\right)ds + \int_0^{t \wedge \tau}\mathcal{G}\left(s, \sy_s\right) d\mathcal{W}_s
\end{equation}
$\mathbbm{P}-a.s.$ in $U$ for all $t \geq 0$. Then $\sy$ satisfies the identity
\begin{equation} \label{strat ident}    \sy_{t} = \sy_0 + \int_0^{t\wedge \tau} \mathcal{A}\left(s,\sy_s\right)ds + \int_0^{t \wedge \tau}\mathcal{G}\left(s, \sy_s\right) \circ d\mathcal{W}_s
\end{equation}
$\mathbbm{P}-a.s.$ in $X$ for all $t \in [0,T]$.

\end{theorem}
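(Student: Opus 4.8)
The plan is to reduce the theorem to a single identity between stochastic integrals. Since \eqref{strat ident} and \eqref{converted identity} share the terms $\sy_0$ and $\int_0^{t\wedge\tau}\mathcal{A}(s,\sy_s)\,ds$, the claim is equivalent to the assertion that, $\mathbbm{P}$-a.s.\ and for every $t$,
\[
\int_0^{t\wedge\tau}\mathcal{G}(s,\sy_s)\circ d\mathcal{W}_s \;=\; \int_0^{t\wedge\tau}\mathcal{G}(s,\sy_s)\,d\mathcal{W}_s \;+\; \frac{1}{2}\sum_{i=1}^\infty\int_0^{t\wedge\tau}D_u\mathcal{G}_i(s,\sy_s)\big[\mathcal{G}_i(s,\sy_s)\big]\,ds
\]
as an equation in $X$. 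Unwinding the definition of the Stratonovich integral as the limit, in probability and uniformly in $t\in[0,T]$ in the $X$-norm, of the Riemann--Stratonovich sums for $\mathbbm{1}_{[0,\tau]}\mathcal{G}(\cdot,\sy_\cdot)$ along partitions of vanishing mesh --- whose generic term is $\tfrac{1}{2}\big(\mathcal{G}_i(t_k,\sy_{t_k})+\mathcal{G}_i(t_{k+1},\sy_{t_{k+1}})\big)\big(W^i_{t_{k+1}}-W^i_{t_k}\big)$, summed over nodes $k$ and components $i$ of $\mathcal{W}$ --- and writing the two-point average as its left endpoint plus one half of the increment over the subinterval, these sums split as a left-point approximation of $\int_0^{t\wedge\tau}\mathcal{G}(s,\sy_s)\,d\mathcal{W}_s$, which converges to it by the standard approximation of the stochastic integral by left-point sums (using that $\mathcal{G}(s,\sy_s)\in\mathscr{L}^2(\mathfrak{U};U)\subset\mathscr{L}^2(\mathfrak{U};X)$ along the path), plus one half of a correction sum $\sum_i\sum_k(\Delta_k\mathcal{G}_i)(\Delta_k W^i)$, where $\Delta_k$ denotes the increment over the $k$-th subinterval. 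It then remains to identify the limit of this correction sum with $\sum_i\int_0^{t\wedge\tau}D_u\mathcal{G}_i(s,\sy_s)\big[\mathcal{G}_i(s,\sy_s)\big]\,ds$.

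For that step, I would first record that, by \eqref{converted identity} and the growth hypotheses on $\mathcal{A}$ and $\mathcal{G}$, $\sy$ is a continuous $U$-valued semimartingale with martingale part $\int_0^{\cdot\wedge\tau}\mathcal{G}(s,\sy_s)\,d\mathcal{W}_s$ and drift of pathwise finite variation in $U$, and that since $\sy\in C([0,T];H)$ the path $\{\sy_s(\omega):s\in[0,T]\}$ is $\mathbbm{P}$-a.s.\ bounded in $H$, hence in $U$, so that $\partial_t\mathcal{G}_i$, $D_u\mathcal{G}_i$, $D_{uu}\mathcal{G}_i$ are bounded along it. Expanding $\Delta_k\mathcal{G}_i$ by Taylor's theorem in $U$ --- valid because $\mathcal{G}_i(t,\cdot)\colon U\to X$ is $C^2$ with derivatives bounded along the path --- and substituting \eqref{converted identity} for the increment of $\sy$, then pairing with $\Delta_k W^i$ and summing over $i$ and $k$, the correction sum decomposes into: terms in which a factor of finite variation (arising from $\partial_t\mathcal{G}_i$, the drift of $\sy$, and the second-order remainder) multiplies $\Delta_k W^i$, whose sum vanishes since each term is $O(\Delta t)O(\sqrt{\Delta t})$ and the whole is dominated by an $i$-summable, $s$-integrable bound; the off-diagonal stochastic terms $\sum_{i\ne j}\sum_k\big(\int_{t_k}^{t_{k+1}}D_u\mathcal{G}_i(s,\sy_s)\big[\mathcal{G}_j(s,\sy_s)\big]dW^j_s\big)\Delta_k W^i$, which vanish in the limit by independence of $W^i$ and $W^j$ --- here the martingale techniques enter: one bounds these terms in $L^2(\Omega)$ through It\^{o}'s isometry and upgrades to convergence uniform in $t$ via the Burkholder--Davis--Gundy and Doob inequalities --- and the diagonal stochastic terms $\sum_i\sum_k\big(\int_{t_k}^{t_{k+1}}D_u\mathcal{G}_i(s,\sy_s)\big[\mathcal{G}_i(s,\sy_s)\big]dW^i_s\big)\Delta_k W^i$, which converge, for each fixed $i$ by the classical limit $\sum_k\big(\int_{t_k}^{t_{k+1}}g\,dW^i\big)\Delta_k W^i\to\int g\,d[W^i]=\int g\,ds$ and then in the sum over $i$ by dominated convergence for series, to exactly $\sum_i\int_0^{t\wedge\tau}D_u\mathcal{G}_i(s,\sy_s)\big[\mathcal{G}_i(s,\sy_s)\big]\,ds$. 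Assembling the three contributions identifies the limit of the correction sum and establishes the reduced identity, hence the theorem.

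The term-by-term limits above are classical; the real work is to carry them out simultaneously in the infinite sum over $i$ and in the space $X$. Two points are delicate. First, the Taylor/It\^{o} expansion of $\mathcal{G}_i(\cdot,\sy_\cdot)$ must be justified for a semimartingale $\sy$ whose drift is only pathwise integrable in $U$ and which lies in $V$ merely for a.e.\ time; this forces a localization (for instance along stopping times truncating $\norm{\sy}_H$) together with the convergence, in $X$, of the quadratic-variation series $\sum_j D_{uu}\mathcal{G}_i(s,\sy_s)\big[\mathcal{G}_j(s,\sy_s),\mathcal{G}_j(s,\sy_s)\big]$, for which one uses $\mathcal{G}(s,\sy_s)\in\mathscr{L}^2(\mathfrak{U};U)$ and the boundedness of $D_{uu}\mathcal{G}_i$ along the path. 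Second --- and this is the crux --- one must interchange the partition limit with the sum over $i$; this is exactly where the hypotheses $\sum_i c_i^2<\infty$, $\norm{\mathcal{G}_i(t,\phi)}_H\le c_i(1+\norm{\phi}_V)$ and $\norm{D_u\mathcal{G}_i(t,\psi)}_{\mathscr{L}(H;U)}\le c_i(1+\norm{\psi}_H^q)$ must be combined (with the a.s.\ boundedness of $\sy$ in $H$ and $\sy\mathbbm{1}_{\cdot\leq\tau}\in L^2([0,T];V)$) to furnish the $i$-summable, $s$-integrable dominating function that legitimises dominated convergence for the series. The unboundedness of $\mathcal{G}$ is precisely what prevents all of this from closing inside $U$ and forces the identification to take place in the larger space $X$, where the derivative bounds for $\mathcal{G}_i$ hold.
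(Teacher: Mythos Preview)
Your proposal rests on a definition of the Stratonovich integral that the paper does not adopt. You ``unwind'' $\int_0^{t\wedge\tau}\mathcal{G}(s,\sy_s)\circ d\mathcal{W}_s$ as a limit of Riemann--Stratonovich (midpoint/average) sums, but the paper's Stratonovich integral is Definition~\ref{strat definition}: for a localising sequence $(\tau_n)$,
\[
\int_0^{t\wedge\tau}B_s\circ d\mathcal{W}_s := \lim_{n\to\infty}\sum_{i=1}^\infty\Bigl(\int_0^t B_s^n(e_i)\,dW^i_s + \tfrac{1}{2}\bigl[B^{\tau_n}(e_i),W^i\bigr]_t\Bigr),
\]
with the cross-variation defined via the basis expansion in $X$. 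The paper explicitly flags this distinction in Subsection~\ref{subs motive}, noting that prior works ``define the Stratonovich integral by \dots the limit of a sum over partitions in time evaluated at the mid-point of the intervals; we use the semi-martingale definition and approach.'' So proving that the midpoint sums converge to the right-hand side of your displayed identity does not, by itself, establish \eqref{strat ident} as the paper means it.

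What the paper actually does is verify the three items of Definition~\ref{strat definition} for $B=\mathcal{G}(\cdot,\sy_\cdot)$ with the concrete localisation $\tau_n=\tau\wedge\inf\{s:\sup_{r\le s}\norm{\sy_r}_H^2+\int_0^{s\wedge\tau}\norm{\sy_r}_V^2\,dr\ge n\}$, and then compute $[\mathcal{G}_i(\cdot,\sy_\cdot)^{\tau_n},W^i]_t$ directly. The key step you are missing is that to even form this cross-variation one must first show $\mathcal{G}_i(\cdot,\sy_\cdot)^{\tau_n}\in\bar{\mathcal{M}}^2_c(X)$; the paper does this by applying the It\^{o} formula of Lemma~\ref{Ito formula} to $\mathcal{G}_i(t,\sy_t)$, obtaining an explicit semimartingale decomposition in $X$ whose martingale part is $\int_0^{\cdot\wedge\tau_n}D_u\mathcal{G}_i(s,\sy_s)[\mathcal{G}(s,\sy_s)]\,d\mathcal{W}_s$. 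The cross-variation is then read off via Lemma~\ref{cross variation convergence} and the independence of the $W^j$, yielding $\int_0^{t\wedge\tau_n}D_u\mathcal{G}_i(s,\sy_s)[\mathcal{G}_i(s,\sy_s)]\,ds$. Your Taylor-expansion heuristic is morally the same computation, but carried out at the level of partition sums rather than through the It\^{o} formula and cross-variation lemmas that the paper's definition demands; to make your route rigorous here you would have to first prove that the Riemann--Stratonovich limit exists in $X$ and coincides with Definition~\ref{strat definition}, which is at least as much work as the paper's direct argument.
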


Theorem \ref{main theorem} will be proven in Section \ref{section proof of main}.

\subsection{Motivation and Relation to the Literature} \label{subs motive}

Of the many physically relevant Stratonovich SPDEs, let us first draw attention to the thriving literature of transport noise in fluid mechanics. Considered since the work of Kraichnan [\cite{kraichnan1968small}], transport noise in fluids have attracted significant interest in the last ten years due to a plethora of modelling and theoretical developments. Briefly, these include the use of geometric variational principles [\cite{holm2015variational}], a Lagrangian Reynolds Decomposition and Transport Theorem [\cite{memin2014fluid}], stochastic model reduction [\cite{debussche2024second}, \cite{flandoli20212d}] and regularisation by noise [\cite{flandoli2021delayed}, \cite{flandoli2021high}]. Whilst the type of transport noise varies based on equation and philosophy, the shared term is a Stratonovich integral $$\sum_{i=1}^\infty \int_0^t\mathcal{L}_{\xi_i}\sy_s \circ dW^i_s$$
where $\mathcal{L}_{\xi_i}$ is defined for sufficiently regular $f: \mathscr{O} \rightarrow \R^d$, $\mathscr{O} \subset \R^n$ by $\mathcal{L}_{\xi_i}f = \sum_{j=1}^n\xi_i^j\partial_jf$. Each $\xi_i:\mathscr{O} \rightarrow \R^n$ and the collection $(\xi_i)$ will either have strong decay or orthogonality so that the sum converges, whilst $(W^i)$ are independent standard Brownian Motions. The process $\sy$ is prescribed by an evolution equation, for which it is essential to rewrite the Stratonovich integral as a corrected It\^{o} one; the prevalent approach has been a heuristic observation, see for example [\cite{alonso2020well}, \cite{butori2024ito},  \cite{butori2024mean}, \cite{crisan2019solution}, \cite{flandoli20242d}, \cite{galeati2020convergence}], arguing by linearity of $\mathcal{L}_{\xi_i}$ and standard stochastic calculus that the It\^{o}-Stratonovich corrector for each term in the summand takes the form
\begin{align*}
    \frac{1}{2}\left[\mathcal{L}_{\xi_i}\sy, W^i \right]_t &= \frac{1}{2}\mathcal{L}_{\xi_i}\left[\sy, W^i \right]_t =  \frac{1}{2}\mathcal{L}_{\xi_i}\left[\sum_{j=1}^\infty\int_0^\cdot \mathcal{L}_{\xi_j}\sy_s dW^j, W^i \right]_t\\
   &=  \frac{1}{2}\mathcal{L}_{\xi_i}\left(\int_0^t \mathcal{L}_{\xi_i}\sy_s ds \right) = \frac{1}{2}\int_0^t \mathcal{L}_{\xi_i}^2\sy_s ds.
\end{align*}
Innumerably many more works cite one of the above as justification, or directly state the It\^{o} form given the now extensive literature on these equations. The authors proceed with their analysis and results on the It\^{o} form, without rigorously addressing how these results apply to the original and well-motivated Stratonovich SPDE. A key consideration is the fact that the operator $\mathcal{L}_{\xi_i}$ is unbounded, as a differential operator, so in the above heuristic one is really passing to a larger function space when putting $\mathcal{L}_{\xi_i}$ inside of the cross-variation or stochastic integral. More apparently, whilst $\sy$ would then be specified by a second order SPDE, the evolution equation for $\mathcal{L}_{\xi_i}\sy$ takes three derivatives of $\sy$ so the drift terms in $\mathcal{L}_{\xi_i}\sy$ may no longer be of finite variation in the function space which $\sy$ satisfies its SPDE.\\

At a glance, one could avoid this issue of regularity by arguing in weak form. There, one may pass the derivatives of $\mathcal{L}_{\xi_i}$ onto test functions which can bare the regularity requirement: this approach was given in [\cite{flandoli2021high}, \cite{galeati2023weak}], where only one dimensional martingale arguments are needed as the tested identity is satisfied in $\R$. In reconciling this notion of weak solution, one meets the issue of identifying
$$ \int_0^t \inner{\mathcal{L}_{\xi_i}\sy}{\phi} \circ dW^i_s = \inner{\int_0^t \mathcal{L}_{\xi_i}\sy \circ dW^i_s}{\phi}$$
whereby the cost of a derivative becomes key once more, and infinite dimensional martingale arguments are required.\\

These heuristics were made rigorous by the author and Dan Crisan in the book [\cite{goodair2024stochastic}], examining properties of the cross-variation in Hilbert Spaces and quantifying the `loss of a derivative' experienced in the conversion, understood broadly in the variational framework by an analytically strong solution of the It\^{o} equation is an analytically weak solution of the Stratonovich SPDE. However, following the above heuristics, the arguments relied heavily on the fact that the noise operator was linear and time-independent. Furthermore, the resulting expression was not obviously compatible with the well-known conversion in finite dimensions. Sufficiency for applications and mathematical completeness thus motivated Theorem \ref{main theorem}.\\

In Section \ref{section applications} we shall consider applications of Theorem \ref{main theorem} to relevant cases of time-dependence and nonlinearity, which we comment on now. Whilst there are many such Stratonovich noise structures available, we continue our motivations with transport noise. There is a strong argument, from the modelling perspective, to include time-dependence in the spatial correlation functions $(\xi_i)$. In the words of Holm's breakthrough paper, the $(\xi_i)$ are `specified from the physics of the problem \dots obtained from, say, coarse-grained observations or computations'; it is well reported that the ocean exhibits \textit{memory}, see for example [\cite{shi2022global}, \cite{woods1981memory}], so the observed spatial correlations of ocean dynamics should evolve over time. An existence result for the stochastic Navier-Stokes equations is given in Subsection \ref{subs time dep}. In the direction of nonlinearity, a transport noise was proposed in [\cite{flandoli20242d}] following the idea that `turbulence is more developed in regions of high large-scale vorticity; hence, the small-scale noise should be modulated by an increasing function [of vorticity]'. The small-scale noise produces a large-scale transport noise in the separation of scales limit, given in [\cite{flandoli20242d}] by
$$\sum_{i=1}^\infty \int_0^t\mathcal{L}_{\xi_i}\left(f(w_s)\right) \circ dW^i_s $$
where $w: \T^2 \rightarrow \R$ is the fluid vorticity and $f: \R \rightarrow \R$ modulates the noise intensity. The authors formally calculate the It\^{o}-Stratonovich corrector by firstly applying the above heuristic, and secondly using the standard one-dimensional It\^{o} Formula for $f(w(x))$ applied pointwise in space. More generally, one could consider the noise
$$\sum_{i=1}^\infty \int_0^t\mathcal{L}_{\xi_i}F(\sy_s) \circ dW^i_s $$
where $F$ is a function space valued mapping with genuinely infinite dimensional nonlinear effects. Whilst the prior heuristic no longer applies, we compute the corrector in Subsection \ref{subs nonlinear trans} by applying Theorem \ref{main theorem} and show that it agrees with the specific choice of $F$ as an evaluation map $F(\psi)(x) = f(\psi(x))$.\\

More generally, we are only aware of two results concerning an It\^{o}-Stratonovich conversion in infinite dimensions: the paper [\cite{twardowska2004relation}] and the book [\cite{duan2014effective}] Subchapter 4.5.2. In both cases, the noise operator must be time-independent and mapping within the same Hilbert Space. Mild solutions enjoying moment estimates are considered in [\cite{twardowska2004relation}] whilst the identification is only formal in [\cite{duan2014effective}]. Critically, both define the Stratonovich integral by, and conduct their proofs using, the limit of a sum over partitions in time evaluated at the mid-point of the intervals; we use the semi-martingale definition and approach. A rigorous conversion in a variational framework, where the noise operator is time-dependent and unbounded, understood via martingale techniques, all appear to be novelties of this work.

\subsection{Preliminaries} \label{stochastic prelims}

In this subsection we establish the necessary prerequisites to understand and prove Theorem \ref{main theorem}. Recall that we have fixed a time horizon $[0,T]$ and a filtered probability space $(\Omega,\mathcal{F},(\mathcal{F}_t), \mathbbm{P})$ satisfying the usual conditions of completeness and right continuity, supporting a Cylindrical Brownian Motion $\mathcal{W}$ over some separable Hilbert Space $\mathfrak{U}$ with orthonormal basis $(e_i)$. Recall (e.g. [\cite{lototsky2017stochastic}], Definition 3.2.36) that $\mathcal{W}$ admits the representation $\mathcal{W}_t = \sum_{i=1}^\infty e_iW^i_t$ as a limit in $L^2(\Omega;\mathfrak{U}')$ whereby the $(W^i)$ are a collection of i.i.d. standard real valued Brownian Motions and $\mathfrak{U}'$ is an enlargement of the Hilbert Space $\mathfrak{U}$ such that the embedding $J: \mathfrak{U} \rightarrow \mathfrak{U}'$ is Hilbert-Schmidt and $\mathcal{W}$ is a $JJ^*-$cylindrical Brownian Motion over $\mathfrak{U}'$. Let $\mathcal{H}, \mathcal{K}$ denote separable Hilbert Spaces. Given a process $B:[0,T] \times \Omega \rightarrow \mathscr{L}^2(\mathfrak{U};\mathcal{H})$ progressively measurable and such that $B \in L^2\left(\Omega \times [0,T];\mathscr{L}^2(\mathfrak{U};\mathcal{H})\right)$, for any $0 \leq t \leq T$ we define the stochastic integral $$\int_0^tB_sd\mathcal{W}_s:=\sum_{i=1}^\infty \int_0^tB_s(e_i)dW^i_s,$$ where the infinite sum is taken in $L^2(\Omega;\mathcal{H})$. We can extend this notion to processes $B$ which are such that $B(\omega) \in L^2\left( [0,T];\mathscr{L}^2(\mathfrak{U};\mathcal{H})\right)$ for $\mathbbm{P}-a.e.$ $\omega$ via the traditional localisation procedure. In this case the stochastic integral is a local martingale in $\mathcal{H}$. We defer to [\cite{goodair2024stochastic}] Chapter 2 for further details on this construction and properties of the stochastic integral.\\

To prove Theorem \ref{main theorem} we shall make use of an infinite dimensional It\^{o} Formula. Recall that $F:[0,T] \times \mathcal{H} \rightarrow \mathcal{K}$ has Fr\'{e}chet partial derivatives, if they exist, as mappings:
\begin{itemize}
    \item $\partial_tF: [0,T] \times \mathcal{H} \rightarrow \mathcal{K}$;
    \item $D_hF: [0,T] \times \mathcal{H} \rightarrow \mathscr{L}\left( \mathcal{H};\mathcal{K}\right)$;
    \item $D_{hh}F: [0,T] \times \mathcal{H} \rightarrow \mathscr{L}\left( \mathcal{H};\mathscr{L}\left(\mathcal{H};\mathcal{K}\right)\right)$.
\end{itemize}

Note that strictly $D_tF: [0,T] \times \mathcal{H} \rightarrow \mathscr{L}\left([0,T];\mathcal{K} \right)$ which we identify with $\mathcal{K}$, in agreement with the usual derivative $\partial_tF$ as written. The following can be found in [\cite{curtain1970ito}] Theorem 3.8, with minor modifications taken from [\cite{gawarecki2010stochastic}] Theorem 2.9 to ensure correctness and remove the assumption on moment estimates.

\begin{lemma}\label{Ito formula}
    Suppose that:
    \begin{itemize}
    \item $\py_0 : \Omega \rightarrow \mathcal{H}$ is $\mathcal{F}_0-$measurable;
        \item $\phi: [0,T] \times \Omega \rightarrow \mathcal{H}$ is adapted and belongs $\mathbbm{P}-a.s.$ to $L^1\left([0,T];\mathcal{H}\right)$;
        \item $B: [0,T] \times \Omega \rightarrow \mathscr{L}^2\left(\mathfrak{U};\mathcal{H}\right)$ is progressively measurable and belongs $\mathbbm{P}-a.s.$ to $L^2\left([0,T]; \mathscr{L}^2\left(\mathfrak{U};\mathcal{H}\right)  \right)$;
        \item $F:[0,T] \times \mathcal{H} \rightarrow U$ is continuous with Fr\'{e}chet partial derivatives $\partial_tF$, $D_hF$, $D_{hh}F$ continuous and bounded on bounded subsets of $[0,T] \times \mathcal{H}$.
    \end{itemize}
    Let $\py$ satisfy the evolution equation
    \begin{equation} \label{a satisfied evo} \py_t = \py_0 + \int_0^t\phi_sds + \int_0^tB_sd\mathcal{W}_s\end{equation}
    $\mathbbm{P}-a.s.$ in $\mathcal{H}$ for all $t \in [0,T]$. Then
    \begin{align*}
        F(t,\py_t) = F(0,\py_0) &+ \int_0^t\left(\partial_tF\left(s, \py_s\right) + D_hF\left(s, \py_s\right)[\phi_s]\right) ds + \int_0^t D_hF\left(s, \py_s\right)[B_s] d\mathcal{W}_s\\
        &+ \frac{1}{2}\int_0^t\sum_{i=1}^\infty D_{hh}F\left(s,\py_s
        \right)[B_s(e_i)][B_s(e_i)] ds 
    \end{align*}
    $\mathbbm{P}-a.s.$ in $\mathcal{K}$ for all $t \in [0,T]$. 
\end{lemma}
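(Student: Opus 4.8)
The plan is to prove the formula by the classical partition-and-Taylor argument, adapted to the cylindrical noise: first localise so that the derivatives of $F$ are bounded, then expand $F(t,\py_t)-F(0,\py_0)$ as a telescoping sum along a refining sequence of time partitions, using a second-order Taylor expansion in the spatial variable, and finally identify the limiting drift, martingale, and quadratic-variation contributions. To localise I would set
$$\tau_n := \inf\left\{t \in [0,T]: \norm{\py_t}_\mathcal{H} \geq n \ \text{ or } \ \int_0^t\norm{\phi_s}_\mathcal{H}\,ds \geq n \ \text{ or } \ \int_0^t\norm{B_s}_{\mathscr{L}^2(\mathfrak{U};\mathcal{H})}^2\,ds \geq n\right\}$$
(with $\inf\emptyset := T$); the path continuity of $\py$ and the $\mathbbm{P}-a.s.$ integrability hypotheses give $\tau_n \uparrow T$, so it suffices to prove the stopped identity for each $n$. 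On $[0,\tau_n]$ the process $\py$ remains in the ball of radius $n$ in $\mathcal{H}$, on which $\partial_tF$, $D_hF$ and $D_{hh}F$ are bounded and uniformly continuous by hypothesis; this is precisely the control needed to bound the Taylor remainders below, and after stopping I may also assume $\phi \in L^1(\Omega\times[0,T];\mathcal{H})$ and $B \in L^2(\Omega\times[0,T];\mathscr{L}^2(\mathfrak{U};\mathcal{H}))$.

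Next, fixing $t$ and a partition $0 = t_0 < \cdots < t_N = t$ of vanishing mesh, I would write $F(t,\py_t)-F(0,\py_0)$ as the telescoping sum of its increments and split each increment into a time part $F(t_{k+1},\py_{t_{k+1}}) - F(t_k,\py_{t_{k+1}})$ and a spatial part $F(t_k,\py_{t_{k+1}}) - F(t_k,\py_{t_k})$. The summed time part converges to $\int_0^t \partial_tF(s,\py_s)\,ds$ by the mean value theorem together with the continuity of $\partial_tF$ along the continuous path $\py$.

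For the spatial part I would expand to second order,
$$F(t_k,\py_{t_{k+1}}) - F(t_k,\py_{t_k}) = D_hF(t_k,\py_{t_k})[\Delta_k] + \tfrac12 D_{hh}F(t_k,\py_{t_k})[\Delta_k][\Delta_k] + R_k,$$
where $\Delta_k := \py_{t_{k+1}} - \py_{t_k} = \int_{t_k}^{t_{k+1}}\phi_s\,ds + \int_{t_k}^{t_{k+1}}B_s\,d\mathcal{W}_s$ and $R_k$ is the integral-form remainder. Substituting the two pieces of $\Delta_k$, the first-order sum $\sum_k D_hF(t_k,\py_{t_k})[\Delta_k]$ splits into a Riemann sum converging to $\int_0^t D_hF(s,\py_s)[\phi_s]\,ds$ and a martingale sum converging in probability to $\int_0^t D_hF(s,\py_s)[B_s]\,d\mathcal{W}_s$, using that $s \mapsto D_hF(s,\py_s)$ is continuous and adapted and the stability of the stochastic integral under approximation of its integrand.

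The hard part will be the second-order sum $\tfrac12\sum_k D_{hh}F(t_k,\py_{t_k})[\Delta_k][\Delta_k]$, which must converge to $\tfrac12\int_0^t\sum_i D_{hh}F(s,\py_s)[B_s(e_i)][B_s(e_i)]\,ds$. The drift–drift and drift–martingale cross terms vanish in $L^1$ as the mesh tends to zero, being bounded by the mesh times controlled quantities, so only the martingale–martingale term survives. Writing the stochastic increment componentwise as $\sum_i \int_{t_k}^{t_{k+1}}B_s(e_i)\,dW^i_s$, I would pass to conditional expectations: the off-diagonal pairs $i \neq j$ average to zero by independence of the $(W^i)$, while each diagonal term contributes $\int_{t_k}^{t_{k+1}} D_{hh}F(t_k,\py_{t_k})[B_s(e_i)][B_s(e_i)]\,ds$ via the one-dimensional It\^o isometry. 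The delicate point is interchanging the infinite sum over $i$ with the partition limit, which I would justify using the Hilbert–Schmidt identity $\sum_i\norm{B_s(e_i)}_\mathcal{H}^2 = \norm{B_s}_{\mathscr{L}^2(\mathfrak{U};\mathcal{H})}^2$ together with the operator bound on $D_{hh}F$ over the localising ball, so that the tail of the sum is uniformly negligible. Finally the remainder $\sum_k R_k$ is controlled by the modulus of continuity of $D_{hh}F$ on the ball against $\sum_k\norm{\Delta_k}_\mathcal{H}^2$, whose limiting order is that of the quadratic variation, forcing $\sum_k R_k \to 0$ in probability. Collecting the four limits yields the identity in $\mathcal{K}$, and letting $n \to \infty$ removes the localisation.
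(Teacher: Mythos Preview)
The paper does not supply its own proof of this lemma: it is quoted as a known result, with the sentence preceding it attributing the statement to Curtain [Theorem~3.8] with minor modifications from Gawarecki--Mandrekar [Theorem~2.9]. Your proposal follows the classical localise--partition--Taylor argument, which is exactly the route those references take, so in that sense your approach agrees with the proof the paper is invoking.

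One technical point worth tightening in your sketch: you assert that on the localising ball the derivatives are ``bounded and uniformly continuous by hypothesis'', but the hypothesis only gives continuity and boundedness on bounded subsets of $[0,T]\times\mathcal{H}$, and in an infinite-dimensional Hilbert space a closed ball is not compact, so uniform continuity does not follow automatically. The usual remedy, which suffices for the remainder estimate, is to observe that for each fixed $\omega$ the path range $\{(s,\py_s(\omega)) : s \in [0,t\wedge\tau_n]\}$ is compact in $[0,T]\times\mathcal{H}$ by continuity of $\py$, and uniform continuity of $D_{hh}F$ along that compact set is all that is needed to force $\sum_k R_k \to 0$. With that adjustment the argument goes through.
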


Note that in the above, the noise operator is defined on $\mathfrak{U}$ by $D_hF\left(s, \py_s\right)[B_s](e_i) = D_hF\left(s, \py_s\right)[B_s(e_i)]$. We now move towards a proper definition of the Stratonovich integral. Denote by $\mathcal{M}^2_c(\mathcal{H})$ the usual space of continuous, square-integrable martingales in $\mathcal{H}$. As in [\cite{goodair2024stochastic}] Definition 2.18, for $\py \in \mathcal{M}^2_c(\mathcal{H})$ and $Y \in \mathcal{M}^2_c(\R)$ we define the cross-variation $[\py,Y]$ with respect to an orthonormal basis $(a_k)$ of $\mathcal{H}$ by
$$ [\py, Y] := \sum_{k=1}^\infty[\inner{\py}{a_k}_{\mathcal{H}}, Y]a_k$$ $\mathbbm{P}-a.s.$  for the limit taken in $C\left([0,T];\mathcal{H} \right)$, where $[\inner{\py}{a_k}_{\mathcal{H}}, Y]$ is the usual one dimensional cross-variation. This is characterised in [\cite{goodair2024stochastic}] Proposition 2.14, and naturally extends to continuous and square-integrable semi-martingales, which we represent by $\bar{\mathcal{M}}^2_c$. We can now define the Stratonovich integral for locally square-integrable semi-martingales, as required for (\ref{strat ident}).

\begin{definition} \label{strat definition}
Suppose that there exists a sequence of stopping times $(\tau_n) \in [0,T]$ which are $\mathbbm{P}-a.s.$ monotonically increasing and eventually equal to another stopping time $\tau \in [0,T]$ such that:
\begin{enumerate}
    \item \label{item 1}For every $n$, the process $$B^n_{\cdot}:=B_{\cdot}\mathbbm{1}_{\cdot \leq \tau^n}$$
    is progressively measurable and belongs to $L^2\left( \Omega \times [0,T]; \mathscr{L}^2(\mathfrak{U};\mathcal{H})\right)$;
 \item \label{item 2} For every $n$ and $i$, the process 
 $$ B^{\tau_n}_{\cdot}(e_i):= B_{\cdot \wedge \tau^n}(e_i)$$
 belongs to $\bar{\mathcal{M}}^2_c(\mathcal{H})$;
 \item \label{item 3} For every $t \in [0,T]$ the limit $$\sum_{i=1}^\infty [B^{\tau_n}(e_i),W^i]_t$$ is well defined in $L^2\left(\Omega;\mathcal{H}\right)$.
\end{enumerate}
Then the Stratonovich stochastic integral is defined for any $t \in [0,T]$ as $$\int_0^{t \wedge \tau} B_s \circ d\mathcal{W}_s := \lim_{n \rightarrow \infty}\left(\sum_{i=1}^\infty \left(\int_0^tB^n_{s}(e_i) dW^i_s + \frac{1}{2}[B^{\tau_n}(e_i),W^i]_t\right)\right)$$ where the limit is taken $\mathbbm{P}-a.s.$ in $\mathcal{H}$, and the infinite sum in $L^2\left(\Omega;\mathcal{H}\right)$.
\end{definition}

Whilst the stopping time $\tau$ could simply be $T$, for us it will be the local time of existence of the SPDE as in Theorem \ref{main theorem}. We conclude the preliminaries with a lemma to facilitate computations of the cross-variation in the proof of Theorem \ref{main theorem}. This can be found in [\cite{goodair2024stochastic}] Lemma 2.6.

\begin{lemma} \label{cross variation convergence}
    Suppose that $(\py^n)$ is a sequence of martingales in $\mathcal{M}^2_c(\mathcal{H})$ which at every time $t \in [0,T]$, converges in $L^2\big(\Omega;\mathcal{H}\big)$ to some $\py_t$. Let $Y \in \mathcal{M}^2_c$. Suppose in addition that at every time $t \in [0,T]$, the sequence $\left([\py^n,Y]_t \right)$ converges to some $L_t$ in $L^1\big(\Omega;\mathcal{H}\big)$ where $L$ is a continuous, adapted process and for every basis vector $a_k$, $\inner{L}{a_k}_{\mathcal{H}}$ is of bounded variation $\mathbbm{P}-a.s.$. Then $\py \in\mathcal{M}^2_c(\mathcal{H})$ and $[\py,Y]$ is indistinguishable from $L$. 
\end{lemma}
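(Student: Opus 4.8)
The plan is to prove the two assertions in sequence: first that the pointwise $L^2$-limit $\py$ has a version lying in $\mathcal{M}^2_c(\mathcal{H})$, so that $[\py,Y]$ is well-defined through the characterisation referenced above; and then to identify this cross-variation with $L$ by testing against each basis vector $a_k$ and appealing to the uniqueness of the one-dimensional cross-variation.

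For the first assertion, the martingale property passes to the limit because conditional expectation is an $L^1$-contraction: from $\E[\py^n_t\mid\mathcal{F}_s]=\py^n_s$ and the convergence $\py^n_t\to\py_t$ in $L^2(\Omega;\mathcal{H})$ at each fixed $t$, one deduces $\E[\py_t\mid\mathcal{F}_s]=\py_s$, while $\py_t$ is $\mathcal{F}_t$-measurable and square-integrable as an $L^2$-limit. To produce a continuous version I would upgrade the pointwise convergence to uniform convergence via Doob's maximal inequality applied to the continuous martingale $\py^n-\py^m$:
$$\E\Big[\sup_{t\in[0,T]}\norm{\py^n_t-\py^m_t}_{\mathcal{H}}^2\Big]\le 4\,\E\big[\norm{\py^n_T-\py^m_T}_{\mathcal{H}}^2\big].$$
Since $(\py^n_T)$ converges, hence is Cauchy, in $L^2(\Omega;\mathcal{H})$, the right-hand side tends to $0$, so $(\py^n)$ is Cauchy in the complete space of continuous $\mathcal{H}$-valued martingales under the norm $\E[\sup_t\norm{\cdot}_{\mathcal{H}}^2]^{1/2}$. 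Its limit is a continuous square-integrable martingale coinciding with $\py_t$ at each $t$, giving the desired version $\py\in\mathcal{M}^2_c(\mathcal{H})$.

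For the identification, fix $a_k$ and set $M^{n,k}:=\inner{\py^n}{a_k}_{\mathcal{H}}$ and $A^{n,k}:=\inner{[\py^n,Y]}{a_k}_{\mathcal{H}}$, the latter being $[M^{n,k},Y]$ by definition of the $\mathcal{H}$-valued cross-variation. Each $M^{n,k}\in\mathcal{M}^2_c(\R)$, so $M^{n,k}Y-A^{n,k}$ is a genuine martingale. I would then pass to the limit in this relation. Using $|\inner{\cdot}{a_k}_{\mathcal{H}}|\le\norm{\cdot}_{\mathcal{H}}$, the hypotheses give $M^{n,k}_t\to\inner{\py_t}{a_k}_{\mathcal{H}}$ in $L^2(\Omega)$ and $A^{n,k}_t\to\inner{L_t}{a_k}_{\mathcal{H}}$ in $L^1(\Omega)$ at each $t$. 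The only non-routine term is the product: since $Y_t\in L^2(\Omega)$, Cauchy--Schwarz yields $\E\big|M^{n,k}_tY_t-\inner{\py_t}{a_k}_{\mathcal{H}}Y_t\big|\le\norm{M^{n,k}_t-\inner{\py_t}{a_k}_{\mathcal{H}}}_{L^2(\Omega)}\norm{Y_t}_{L^2(\Omega)}\to0$, so $M^{n,k}_tY_t\to\inner{\py_t}{a_k}_{\mathcal{H}}Y_t$ in $L^1(\Omega)$. Hence $M^{n,k}_tY_t-A^{n,k}_t\to\inner{\py_t}{a_k}_{\mathcal{H}}Y_t-\inner{L_t}{a_k}_{\mathcal{H}}$ in $L^1(\Omega)$ for each $t$, and since the martingale property survives $L^1$-limits, the process $\inner{\py}{a_k}_{\mathcal{H}}Y-\inner{L}{a_k}_{\mathcal{H}}$ is a martingale.

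To conclude, note that $\inner{L}{a_k}_{\mathcal{H}}$ is continuous and adapted, of bounded variation $\mathbbm{P}$-a.s. by hypothesis, and null at $0$ (as $[\py^n,Y]_0=0$ forces $L_0=0$). These are exactly the defining properties of the one-dimensional cross-variation, so the uniqueness in the referenced characterisation gives $[\inner{\py}{a_k}_{\mathcal{H}},Y]=\inner{L}{a_k}_{\mathcal{H}}$ up to indistinguishability, for every $k$. By the definition $[\py,Y]=\sum_k[\inner{\py}{a_k}_{\mathcal{H}},Y]a_k$ and the basis expansion $\sum_k\inner{L_t}{a_k}_{\mathcal{H}}a_k=L_t$, the continuous processes $[\py,Y]$ and $L$ agree in every component on a common set of full measure, hence are indistinguishable. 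I expect the two delicate points to be the Doob-plus-completeness argument promoting the pointwise limit to a continuous martingale, and the $L^1$-passage through the product $M^{n,k}Y$, the latter relying essentially on the square-integrability of $Y$.
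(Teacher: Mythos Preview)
The paper does not supply its own proof of this lemma: it is quoted verbatim from \cite{goodair2024stochastic} Lemma 2.6 and used as a black box in the proof of Theorem~\ref{main theorem}. Consequently there is nothing to compare against.

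Your argument is correct and is the standard route. Doob's $L^2$-maximal inequality applied to the continuous martingale $\py^n-\py^m$ upgrades the terminal-time $L^2(\Omega;\mathcal{H})$ convergence to convergence in $L^2\big(\Omega;C([0,T];\mathcal{H})\big)$, producing the required continuous square-integrable martingale version of $\py$. The componentwise identification is also sound: from the $L^1(\Omega)$ convergence of $M^{n,k}_tY_t - A^{n,k}_t$ at each $t$ you correctly deduce that $\inner{\py}{a_k}_{\mathcal{H}}Y-\inner{L}{a_k}_{\mathcal{H}}$ is a martingale, and since $\inner{\py}{a_k}_{\mathcal{H}}Y-[\inner{\py}{a_k}_{\mathcal{H}},Y]$ is a continuous local martingale by definition, their difference $[\inner{\py}{a_k}_{\mathcal{H}},Y]-\inner{L}{a_k}_{\mathcal{H}}$ is a continuous local martingale of bounded variation vanishing at zero, hence identically zero. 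Summing against $(a_k)$ and using the assumed continuity of $L$ then gives indistinguishability. The two points you flag as delicate --- the Doob/completeness step and the Cauchy--Schwarz passage through the product $M^{n,k}Y$ --- are exactly the places where the hypotheses $\py^n_T\to\py_T$ in $L^2$ and $Y\in\mathcal{M}^2_c$ are used, and both are handled correctly.
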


\section{Proof of the Main Result} \label{section proof of main}

\begin{proof}[Proof of Theorem \ref{main theorem}:]
To begin the proof, we verify that the suggested It\^{o}-Stratonovich corrector in (\ref{converted identity}) is well defined. Indeed,
\begin{align} \nonumber
    \int_0^{\tau} \sum_{i=1}^\infty\norm{ D_u\mathcal{G}_i\left(s, \sy_s\right)\left[\mathcal{G}_i(s,\sy_s)\right]}_Uds &\leq \int_0^{\tau} \sum_{i=1}^\infty\norm{ D_u\mathcal{G}_i\left(s, \sy_s\right)}_{\mathscr{L}(H;U)}\norm{\mathcal{G}_i(s,\sy_s)}_Hds\\
    &\leq \int_0^{\tau} \sum_{i=1}^\infty c_i^2\left( 1 + \norm{\sy_s}_H^q\right)\left(1 + \norm{\sy_s}_V \right)ds \label{numbered}
\end{align}
which is finite $\mathbbm{P}-a.s.$ due to the square summability of $(c_i)$ and the regularity of $\sy$. Next we introduce the sequence of stopping times $(\tau^n)$ localising the Stratonovich integral of (\ref{strat ident}) in order to define it in $X$. Let
\begin{equation} 
    \tau_n:= \tau \wedge \inf\left\{s \geq 0: \sup_{r \in [0,s]}\norm{\sy_r}_H^2 + \int_0^{s \wedge \tau}\norm{\sy_r}_{V}^2 dr \geq n \right\}
\end{equation}
with the convention that the infimum of the empty set is infinite. Then $(\tau_n)$ are $\mathbbm{P}-a.s.$ monotonically increasing and eventually equal to $\tau$. Item \ref{item 1} of Definition \ref{strat definition} is satisfied as, labelling $\mathcal{G}\left(\cdot, \sy_{\cdot}\right)^n := \mathcal{G}\left(\cdot, \sy_{\cdot}\right)\mathbbm{1}_{\cdot \leq \tau_n}$,
\begin{align*}\mathbbm{E}\left(\int_0^T\sum_{i=1}^\infty\norm{\mathcal{G}_i(s,\sy_s)^n\mathbbm{}}_H^2ds\right) &\leq \mathbbm{E}\left(\int_0^T\sum_{i=1}^\infty c_i^2\left(1 + \norm{\sy_s}_V\right)^2\mathbbm{1}_{s \leq \tau_n} ds\right)\\ &\leq 2\left[\sum_{i=1}^\infty c_i^2\right]\mathbbm{E}\left(\int_0^{\tau_n}\left(1 + \norm{\sy_s}_V^2\right) ds\right)
\end{align*}
which is finite due to control from the stopping time. Moreover, the process is progressively measurable as $\mathcal{G}\left(\cdot, \sy_{\cdot}\right)^n = \mathcal{G}\left(\cdot, \sy_{\cdot}\mathbbm{1}_{\cdot \leq \tau}\right)\mathbbm{1}_{\cdot \leq \tau_n}$ such that measurability of $\mathcal{G}|_V$ and the assumed progressive measurability of $\sy_{\cdot}\mathbbm{1}_{\cdot \leq \tau}$ is sufficient. For item \ref{item 2} of Definition \ref{strat definition} we look to the evolution equation satisfied by $\mathcal{G}_i\left(\cdot, \sy_{\cdot}\right)^{\tau_n}:= \mathcal{G}_i\left(\cdot \wedge \tau_n, \sy_{\cdot \wedge \tau_n}\right)$. Observe that to stop the integrals in (\ref{converted identity}) one can instead introduce $\mathbbm{1}_{\cdot \leq \tau}$ into the integrands, so that (\ref{converted identity}) matches the form (\ref{a satisfied evo}). Therefore we may apply Lemma \ref{Ito formula} to deduce that
\begin{align*}
    \mathcal{G}_i\left(t,\sy_{t}\right) = \mathcal{G}_i\left(0,\sy_0\right) &+ \int_0^{t} D_u\mathcal{G}_i\left(s,\sy_s\right)\left[\left(\mathcal{A}\left(s,\sy_s\right) + \frac{1}{2}\sum_{i=1}^\infty D_u\mathcal{G}_i\left(s, \sy_s\right)\left[\mathcal{G}_i(s,\sy_s)\right] \right)\mathbbm{1}_{s \leq \tau}\right]ds\\ &+ \int_0^t \partial_t\mathcal{G}_i\left(s,\sy_s\right)ds + \int_0^{t}D_u\mathcal{G}_i\left(s,\sy_s\right)\left[\mathcal{G}\left(s, \sy_s\right)\mathbbm{1}_{s \leq \tau}\right] d\mathcal{W}_s\\ &+ \frac{1}{2}\int_0^t\sum_{i=1}^\infty D_{uu}\mathcal{G}_i\left(s, \sy_s\right)\left[\mathcal{G}_i(s,\sy_s)\mathbbm{1}_{\cdot \leq \tau} \right]\left[\mathcal{G}_i(s,\sy_s) \mathbbm{1}_{\cdot \leq \tau}\right] ds
\end{align*}
holds $\mathbbm{P}-a.s.$ in $X$ for all $t \in [0,T]$. As the time integrals are well defined in $X$, it is clear that they are of finite variation. We confirm that the stochastic integral is a genuine square-integrable martingale. As $D_u\mathcal{G}_i\left(s,\sy_s\right)$ is linear and $\tau_n \leq \tau$ $\mathbbm{P}-a.s.$, we may rewrite
$$\int_0^{t \wedge \tau_n}D_u\mathcal{G}_i\left(s,\sy_s\right)\left[\mathcal{G}\left(s, \sy_s\right)\mathbbm{1}_{s \leq \tau}\right] d\mathcal{W}_s = \int_0^{t \wedge \tau_n}D_u\mathcal{G}_i\left(s,\sy_s\right)\left[\mathcal{G}\left(s, \sy_s\right)\right] d\mathcal{W}_s.$$
Square-integrability, and thus that $\mathcal{G}_i\left(\cdot, \sy_{\cdot}\right)^{\tau_n} \in \bar{\mathcal{M}}^2_c(X)$, follows similarly to (\ref{numbered}) as
\begin{align} \nonumber
    \mathbbm{E}\left(\int_0^{\tau_n}\sum_{j=1}^\infty \norm{D_u\mathcal{G}_i\left(s, \sy_s\right)\left[\mathcal{G}_j(s,\sy_s)\right]}_X^2\right) ds &\leq 
    c\mathbbm{E}\left(\int_0^{\tau_n}\sum_{j=1}^\infty \norm{D_u\mathcal{G}_i\left(s, \sy_s\right)\left[\mathcal{G}_j(s,\sy_s)\right]}_U^2 ds\right)\\
    &\leq c\mathbbm{E}\left(\int_0^{\tau_n} \sum_{j=1}^\infty c_i^2c_j^2\left( 1 + \norm{\sy_s}_H^{q}\right)^2\left(1 + \norm{\sy_s}_V \right)^2ds\right) \nonumber
\end{align}
which is again finite due to control from the stopping time. We now examine the cross-variation,
$$\left[\mathcal{G}_i\left(\cdot, \sy_{\cdot}\right)^{\tau_n}, W^i \right]_t = \left[\int_0^{\cdot \wedge \tau_n}D_u\mathcal{G}_i\left(s,\sy_s\right)\left[\mathcal{G}\left(s, \sy_s\right)\right] d\mathcal{W}_s, W^i \right]_t$$
and employing Lemma \ref{cross variation convergence} alongside the definition of the integral as an $L^2\left(\Omega;X\right)$ limit, then
\begin{align*}
    \left[\int_0^{\cdot \wedge \tau_n}D_u\mathcal{G}_i\left(s,\sy_s\right)\left[\mathcal{G}\left(s, \sy_s\right)\right] d\mathcal{W}_s, W^i \right]_t &= \left[\sum_{j=1}^\infty\int_0^{\cdot \wedge \tau_n}D_u\mathcal{G}_i\left(s,\sy_s\right)\left[\mathcal{G}_j\left(s, \sy_s\right)\right] dW^j_s, W^i \right]_t\\
    &= \lim_{m \rightarrow \infty}\left[\sum_{j=1}^m\int_0^{\cdot \wedge \tau_n}D_u\mathcal{G}_i\left(s,\sy_s\right)\left[\mathcal{G}_j\left(s, \sy_s\right)\right] dW^j_s, W^i \right]_t
\end{align*}
for the limit in $L^1\left(\Omega;X\right)$, should it exist and satisfy the conditions of Lemma \ref{cross variation convergence}. To inspect this we introduce an orthonormal basis $(a_k)$ of $X$, so that by definition the above cross-variation is given by 
$$\sum_{k=1}^\infty \left[\inner{\sum_{j=1}^m\int_0^{\cdot \wedge \tau_n}D_u\mathcal{G}_i\left(s,\sy_s\right)\left[\mathcal{G}_j\left(s, \sy_s\right)\right] dW^j_s}{a_k}_X, W^i \right]_ta_k $$
or equivalently
$$\sum_{k=1}^\infty \left[\sum_{j=1}^m\int_0^{\cdot \wedge \tau_n}\inner{D_u\mathcal{G}_i\left(s,\sy_s\right)\left[\mathcal{G}_j\left(s, \sy_s\right)\right]}{a_k}_X dW^j_s, W^i \right]_ta_k.$$
From classical finite dimensional theory, due to the independence of the Brownian Motions then for $m \geq i$, this reduces to
\begin{align*}
    \sum_{k=1}^\infty \left(\int_0^{t \wedge \tau_n}\inner{D_u\mathcal{G}_i\left(s,\sy_s\right)\left[\mathcal{G}_i\left(s, \sy_s\right)\right]}{a_k}_X ds \right)a_k = \int_0^{t \wedge \tau_n} D_u\mathcal{G}_i\left(s,\sy_s\right)\left[\mathcal{G}_i\left(s, \sy_s\right)\right] ds
\end{align*}
which satisfies the conditions of Lemma \ref{cross variation convergence}. Therefore
$$\sum_{i=1}^\infty \left[\mathcal{G}_i\left(\cdot, \sy_{\cdot}\right)^{\tau_n}, W^i_{\cdot} \right]_t = \sum_{i=1}^\infty\int_0^{t \wedge \tau_n} D_u\mathcal{G}_i\left(s,\sy_s\right)\left[\mathcal{G}_i\left(s, \sy_s\right)\right] ds  $$
which we must justify converges in $L^2\left(\Omega;X\right)$. Once more this is similar to (\ref{numbered}), as
\begin{align*}
    &\sum_{i=1}^\infty \left(\mathbbm{E}\left(\norm{\int_0^{t \wedge \tau_n} D_u\mathcal{G}_i\left(s,\sy_s\right)\left[\mathcal{G}_i\left(s, \sy_s\right)\right] ds}_X^2 \right)\right)^{\frac{1}{2}}\\ & \qquad \qquad \qquad \leq c\sum_{i=1}^\infty \left(\mathbbm{E}\left(\int_0^{t \wedge \tau_n}\norm{\ D_u\mathcal{G}_i\left(s,\sy_s\right)\left[\mathcal{G}_i\left(s, \sy_s\right)\right]}_U^2 ds \right)\right)^{\frac{1}{2}}\\
    & \qquad \qquad \qquad \leq c\sum_{i=1}^\infty \left(\mathbbm{E}\left(\int_0^{t \wedge \tau_n} c_i^4\left(1 + \norm{\sy_s}^{2q}_H \right)\left(1 + \norm{\sy_s}^{2}_V \right) ds \right)\right)^{\frac{1}{2}}\\
    & \qquad \qquad \qquad \leq \tilde{c}_n\left(\sum_{i=1}^\infty c_i^2\right) \left(\mathbbm{E}\left(\int_0^{t \wedge \tau_n} \left(1 + \norm{\sy_s}^{2}_V \right) ds \right)\right)^{\frac{1}{2}}
\end{align*}
where $\tilde{c}_n$ is some constant dependent on $n$ owing to the bounds from $\tau_n$, which is finite as established. Thus, the Stratonovich integral appearing in (\ref{strat ident}) is well defined by
    $$\int_0^{t \wedge \tau}\mathcal{G}_i\left(s, \sy_s\right) \circ d\mathcal{W}_s = \lim_{n \rightarrow \infty} \left(\sum_{i=1}^\infty\left(\int_0^t \mathcal{G}\left(s, \sy_s\right)^ndW^i_s + \frac{1}{2}\int_0^{t \wedge \tau_n} D_u\mathcal{G}_i\left(s,\sy_s\right)\left[\mathcal{G}_i\left(s, \sy_s\right)\right] ds \right) \right)$$
for the limit taken $\mathbbm{P}-a.s.$ in $X$, and the infinite sum in $L^2\left(\Omega;X \right)$. The first term is precisely the definition of the It\^{o} integral
$$\int_0^{t \wedge \tau}\mathcal{G}\left(s, \sy_s\right) d\mathcal{W}_s $$
appearing in (\ref{converted identity}). Comparing terms in (\ref{converted identity}) and (\ref{strat ident}), it only remains to show that
$$\lim_{n \rightarrow \infty} \left(\sum_{i=1}^\infty\int_0^{t \wedge \tau_n} D_u\mathcal{G}_i\left(s,\sy_s\right)\left[\mathcal{G}_i\left(s, \sy_s\right)\right] ds \right) = \int_0^{t \wedge \tau}\sum_{i=1}^\infty  D_u\mathcal{G}_i\left(s,\sy_s\right)\left[\mathcal{G}_i\left(s, \sy_s\right)\right] ds.$$
To see this we can extract a $\mathbbm{P}-a.s.$ convergent subsequence of the partial sums converging in $L^2\left(\Omega;X\right)$, which we know to be
$$ \int_0^{t \wedge \tau_n}\sum_{i=1}^\infty  D_u\mathcal{G}_i\left(s,\sy_s\right)\left[\mathcal{G}_i\left(s, \sy_s\right)\right] ds$$
from (\ref{numbered}). It is immediate that
$$\lim_{n \rightarrow \infty} \left(\int_0^{t \wedge \tau_n}\sum_{i=1}^\infty D_u\mathcal{G}_i\left(s,\sy_s\right)\left[\mathcal{G}_i\left(s, \sy_s\right)\right] ds \right) = \int_0^{t \wedge \tau}\sum_{i=1}^\infty  D_u\mathcal{G}_i\left(s,\sy_s\right)\left[\mathcal{G}_i\left(s, \sy_s\right)\right] ds$$
which concludes the proof.

\end{proof}

\section{Applications} \label{section applications}

Applications of Theorem \ref{main theorem} are given in this section. The simple case where the noise is linear is given in Subsection \ref{subs linear op} and is shown to agree with the heuristic given in the introduction. We provide an existence result for the Navier-Stokes equations under a time-dependent transport noise in Subection \ref{subs time dep}. A nonlinear transport noise is considered in Subsection \ref{subs nonlinear trans}.

\subsection{Linear Operators} \label{subs linear op}

\begin{lemma} \label{linear lemma}
    In addition to the assumptions of Theorem \ref{main theorem}, suppose that for every $i \in \N$ and $s \in [0,T]$ that $\mathcal{G}_i(s, \cdot)$ is linear. Then the identity (\ref{converted identity}) reduces to
    \begin{equation} \nonumber    \sy_{t} = \sy_0 + \int_0^{t\wedge \tau} \left(\mathcal{A}\left(s,\sy_s\right) + \frac{1}{2}\sum_{i=1}^\infty \mathcal{G}_i\left(s, \mathcal{G}_i(s,\sy_s)\right)\right)ds + \int_0^{t \wedge \tau}\mathcal{G}\left(s, \sy_s\right) d\mathcal{W}_s
\end{equation}
and the same conclusion holds.
\end{lemma}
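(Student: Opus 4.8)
The plan is to notice that the Fr\'echet derivative of a bounded linear operator is the operator itself, so that the corrector in (\ref{converted identity}) collapses to the stated form, after which Theorem \ref{main theorem} applies verbatim.

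First I would fix $i \in \N$ and $s \in [0,T]$ and record that, under the standing hypotheses together with the assumed linearity, $\mathcal{G}_i(s,\cdot)$ is a bounded linear operator simultaneously along the scale $V \hookrightarrow H \hookrightarrow U \hookrightarrow X$: continuity of $\mathcal{G}_i$ on $[0,T]\times U$ forces $\mathcal{G}_i(s,\cdot) \in \mathscr{L}(U;X)$, while the stated mapping properties of $\mathcal{G}|_H$ and $\mathcal{G}|_V$ give $\mathcal{G}_i(s,\cdot) \in \mathscr{L}(H;U)$ and $\mathcal{G}_i(s,\cdot) \in \mathscr{L}(V;H)$. By linearity the first-order Taylor remainder $\mathcal{G}_i(s,\psi+\eta) - \mathcal{G}_i(s,\psi) - \mathcal{G}_i(s,\eta)$ vanishes identically, so the partial Fr\'echet derivative $D_u\mathcal{G}_i(s,\psi)$ exists, is independent of $\psi$, and equals $\mathcal{G}_i(s,\cdot)$; read off at the regularity level at which it enters (\ref{converted identity}), namely in $\mathscr{L}(H;U)$, it is simply the restriction $\mathcal{G}_i(s,\cdot)|_H$. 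Consistently, $D_{uu}\mathcal{G}_i \equiv 0$, so all second-order terms appearing in the proof of Theorem \ref{main theorem} drop out.

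Next I would substitute. Since $\sy_s \in V$ for $s \le \tau$, we have $\mathcal{G}_i(s,\sy_s) \in H$ by $\mathcal{G}|_V$, hence $D_u\mathcal{G}_i(s,\sy_s)\left[\mathcal{G}_i(s,\sy_s)\right] = \mathcal{G}_i\left(s,\mathcal{G}_i(s,\sy_s)\right)$, the right-hand side understood in $U$ via $\mathcal{G}|_H$. This identity holds pointwise in $(s,\omega)$, and the series $\sum_{i=1}^\infty \mathcal{G}_i\left(s,\mathcal{G}_i(s,\sy_s)\right)$ converges absolutely in $U$ for $\mathbbm{P}$-a.e.\ $(s,\omega)$ exactly as in estimate (\ref{numbered}); therefore (\ref{converted identity}) may be rewritten term by term inside the $ds$-integral as the reduced identity displayed in the lemma.

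Finally, since this reduced identity is nothing but (\ref{converted identity}) with its integrand re-expressed, and since every hypothesis of Theorem \ref{main theorem} remains in force by assumption, Theorem \ref{main theorem} applies and delivers (\ref{strat ident}), which is the claimed conclusion. There is essentially no obstacle; the only point deserving attention is the bookkeeping of which space each copy of $\mathcal{G}_i(s,\cdot)$ and its derivative inhabits along the chain $V \hookrightarrow H \hookrightarrow U \hookrightarrow X$. As a sanity check, when $\mathcal{G}_i = \mathcal{L}_{\xi_i}$ is the time-independent transport operator of Section \ref{section introduction}, the corrector becomes $\frac{1}{2}\sum_{i=1}^\infty \mathcal{L}_{\xi_i}^2\sy_s$, in agreement with the classical heuristic recalled there.
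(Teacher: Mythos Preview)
Your proposal is correct and follows exactly the same approach as the paper: the proof there is the one-line observation that for $F:U\to X$ linear and $\phi,\psi\in U$ one has $D_uF(\phi)[\psi]=F(\psi)$, from which the reduction of (\ref{converted identity}) and the conclusion via Theorem~\ref{main theorem} are immediate. Your additional bookkeeping on the spaces $V\hookrightarrow H\hookrightarrow U\hookrightarrow X$ and the remark that $D_{uu}\mathcal{G}_i\equiv 0$ are accurate but not needed for the argument.
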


\begin{proof}
    This is an immediate consequence of the fact that for $F: U \rightarrow X$ linear and $\phi,\psi \in U$ that $D_uF(\phi)[\psi] = F(\psi)$.
\end{proof}

By Lemma \ref{linear lemma}, when $\mathcal{G}_i$ is further time-independent we recover the form given by the heuristic with transport noise stated in the introduction.  

\subsection{Time-Dependent Transport Noise} \label{subs time dep}

Our next application is the case of a time-dependent transport noise, viewed in an existence result for the Navier-Stokes equations presented now on the $N-$dimensional torus $\T^N$. For the functional analytic framework we recall that any function $f \in L^2(\T^N;\R^N)$ admits the representation \begin{equation} \label{fourier rep}f(x) = \sum_{k \in \mathbb{Z}^N}f_ke^{ik\cdot x}\end{equation} whereby each $f_k \in \mathbb{C}^N$ is such that $f_k = \overbar{f_{-k}}$ and the infinite sum is defined as a limit in $L^2(\T^N;\R^N)$, see e.g. [\cite{robinson2016three}] Subsection 1.5 for details. We then define $L^2_{\sigma}$ as the subset of $L^2(\T^N;\R^N)$ of zero-mean functions $f$ whereby for all $k \in \mathbbm{Z}^N$, $k \cdot f_k = 0$ with $f_k$ as in (\ref{fourier rep}). For general $m \in \N$ we introduce $W^{m,2}_{\sigma}$ as the intersection of $W^{m,2}(\T^N;\R^N)$ respectively with $L^2_{\sigma}$, $W^{0,2}_{\sigma}: = L^2_{\sigma}$ and $W^{-1,2}_{\sigma}:= \left(W^{1,2}_{\sigma}\right)^*$, the dual space. Set $\mathcal{P}$, the Leray Projector, as the orthogonal projection in $L^2(\T^N;\R^N)$ onto $L^2_{\sigma}$. We continue to use $\mathcal{L}$ from Subsection \ref{subs motive}, defined for sufficiently regular vector fields $f,g$ by $\mathcal{L}_{f}g = \sum_{j=1}^Nf^j\partial_j g$. The noise operator that we consider, following its introduction in [\cite{holm2015variational}] but now allowing for the spatial correlation functions $(\xi_i)$ to evolve in time, is defined along the basis components $e_i$ of the noise space $\mathfrak{U}$ by
$$B_i(s,f) := \sum_{j=1}^N\left( \xi_i(s)^j\partial_jf + f^j \nabla \xi_i(s)^j\right).$$
In the statement of the result, we shall denote $W^{k,\infty}(\T^N;\R^N)$ by $W^{k,\infty}$ for simplicity.

\begin{lemma}
        Let $1 \leq m \in \N$, $u_0: \Omega \rightarrow W^{m,2}_{\sigma}$ be $\mathcal{F}_0-$measurable and $\xi_i \in C^1\left( [0,T] ; W^{m+2,\infty}\right)$ with $\xi_i:[0,T] \rightarrow L^2_{\sigma} \cap W^{m+3,\infty}$ and such that $\sum_{i=1}^\infty \sup_{t \in [0,T]}\left(  \norm{\xi_i(t)}_{W^{m+2,\infty}}^2\right) < \infty$. Then there exists a pair $(u,\tau)$ where:
        \begin{enumerate}
            \item $\tau \in (0,T]$ $\mathbbm{P}-a.s.$ is a stopping time;
            \item $u$ is a process such that $u_{\cdot}\mathbbm{1}_{\cdot \leq \tau}$ is progressively measurable in $W^{m+1,2}_{\sigma}$ whilst for $\mathbbm{P}-a.e.$ $\omega$, $u_{\cdot}(\omega) \in C\left([0,T];W^{m,2}_{\sigma}\right)$ and $u_{\cdot}(\omega)\mathbbm{1}_{\cdot \leq \tau(\omega)} \in L^2\left([0,T];W^{m+1,2}_{\sigma}\right)$;
            \item The identity $$  u_t = u_0 - \int_0^{t \wedge \tau}\mathcal{P}\mathcal{L}_{u_s}u_s\ ds + \int_0^{t\wedge \tau} \Delta u_s\, ds  - \int_0^{t\wedge \tau} \mathcal{P}B(s,u_s) \circ d\mathcal{W}_s$$
holds $\mathbbm{P}-a.s.$ in $W^{m-2,2}_{\sigma}$ for all $t \in [0,T]$.
        \end{enumerate}
Moreover if $N=2$ one can choose $\tau := T$.

\end{lemma}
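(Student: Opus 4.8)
The plan is to recognise this equation as an instance of the abstract framework of Theorem \ref{main theorem}, to produce a solution of the associated It\^o-corrected equation from the available local theory for stochastic Navier--Stokes equations, and then to apply Theorem \ref{main theorem} to pass back to the Stratonovich form. Concretely, I would take the quartet
$$W^{m+1,2}_\sigma \hookrightarrow W^{m,2}_\sigma \hookrightarrow W^{m-1,2}_\sigma \hookrightarrow W^{m-2,2}_\sigma$$
in the roles of $V\hookrightarrow H\hookrightarrow U\hookrightarrow X$, set $\mathcal{A}(t,\phi):=-\mathcal{P}\mathcal{L}_\phi\phi+\Delta\phi$, and set $\mathcal{G}_i(t,\phi):=-\mathcal{P}B_i(t,\phi)$ so that $\mathcal{G}(t,\cdot)\circ d\mathcal{W}$ is exactly the noise in the statement. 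The growth bound on $\mathcal{A}$ demanded by Theorem \ref{main theorem} follows from boundedness of $\Delta:W^{m+1,2}_\sigma\to W^{m-1,2}_\sigma$ together with the standard Sobolev product and embedding estimates for the Navier--Stokes nonlinearity, which on these scales give $\norm{\mathcal{P}\mathcal{L}_\phi\phi}_{W^{m-1,2}_\sigma}\le c(1+\norm{\phi}_{W^{m,2}_\sigma}^p)(1+\norm{\phi}_{W^{m+1,2}_\sigma}^2)$.

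Next I would verify the hypotheses on $\mathcal{G}_i$ using linearity and first-order-ness of $B_i(t,\cdot)$: the three maps $\mathcal{G}_i|_V:V\to\mathscr{L}^2(\mathfrak{U};H)$, $\mathcal{G}_i|_H:H\to\mathscr{L}^2(\mathfrak{U};U)$, $\mathcal{G}_i:U\to\mathscr{L}^2(\mathfrak{U};X)$ each cost precisely one derivative, while $D_u\mathcal{G}_i(t,\cdot)=-\mathcal{P}B_i(t,\cdot)$ is independent of the state, $D_{uu}\mathcal{G}_i\equiv0$, and $\partial_t\mathcal{G}_i(t,\phi)=-\mathcal{P}\dot B_i(t,\phi)$ depends on $\dot\xi_i(t)$. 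Hence the continuity and boundedness of $\partial_t\mathcal{G}_i$, $D_u\mathcal{G}_i$, $D_{uu}\mathcal{G}_i$ on bounded subsets, and the bounds $\norm{\mathcal{G}_i(t,\phi)}_H\le c_i(1+\norm{\phi}_V)$, $\norm{D_u\mathcal{G}_i(t,\psi)}_{\mathscr{L}(H;U)}\le c_i(1+\norm{\psi}_H^q)$, reduce to Sobolev bounds for $\xi_i$ and its first few derivatives in $L^\infty$; the assumptions $\xi_i\in C^1([0,T];W^{m+2,\infty})$ and $\xi_i:[0,T]\to L^2_\sigma\cap W^{m+3,\infty}$ supply exactly what is needed, and taking $c_i$ proportional to $\sup_{t\in[0,T]}\norm{\xi_i(t)}_{W^{m+2,\infty}}$ yields $\sum_i c_i^2<\infty$. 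I also record that $\xi_i(t)\in L^2_\sigma$ forces $\mathrm{div}\,\xi_i(t)=0$.

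Since $\mathcal{G}_i(t,\cdot)$ is linear, Lemma \ref{linear lemma} identifies the It\^o--Stratonovich corrector with $\frac{1}{2}\sum_i\mathcal{G}_i(t,\mathcal{G}_i(t,u_t))=\frac{1}{2}\sum_i\mathcal{P}B_i(t,\mathcal{P}B_i(t,u_t))$, and I would then establish existence of a pair $(u,\tau)$ with $\tau\in(0,T]$ $\mathbbm{P}$-a.s. and $u$ of the regularity in item (2), solving the It\^o equation
$$u_t=u_0-\int_0^{t\wedge\tau}\mathcal{P}\mathcal{L}_{u_s}u_s\,ds+\int_0^{t\wedge\tau}\Delta u_s\,ds+\frac{1}{2}\sum_{i=1}^\infty\int_0^{t\wedge\tau}\mathcal{P}B_i(s,\mathcal{P}B_i(s,u_s))\,ds-\int_0^{t\wedge\tau}\mathcal{P}B(s,u_s)\,d\mathcal{W}_s$$
in $W^{m-2,2}_\sigma$. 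This would be drawn from the abstract local well-posedness theory for stochastic Navier--Stokes equations with transport-type noise (in the spirit of [\cite{goodair2024stochastic}]); the decisive point, and the step I expect to be the genuine obstacle, is checking that the coercivity and growth conditions of that theory survive the addition of the second-order corrector to $\Delta$. Here the mechanism is the familiar transport-noise cancellation: because $\mathrm{div}\,\xi_i=0$ the operator $\mathcal{P}B_i(t,\cdot)$ is skew-adjoint on $L^2_\sigma$, so in the $W^{m,2}_\sigma$ It\^o energy identity the correction contributes $\sum_i\inner{u}{\mathcal{P}B_i(s,\mathcal{P}B_i(s,u))}_{W^{m,2}_\sigma}$, which cancels the quadratic-variation term $\sum_i\norm{\mathcal{P}B_i(s,u)}_{W^{m,2}_\sigma}^2$ up to lower-order commutators controlled via $\xi_i\in W^{m+3,\infty}$; the dissipation from $\Delta$ together with the algebraic structure of $\inner{\mathcal{P}\mathcal{L}_u u}{u}_{W^{m,2}_\sigma}$ then closes the a priori estimate and produces a local solution with paths in $C([0,T];W^{m,2}_\sigma)$ and $u_\cdot\mathbbm{1}_{\cdot\le\tau}\in L^2([0,T];W^{m+1,2}_\sigma)$, progressively measurable in $W^{m+1,2}_\sigma$.

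With such a $(u,\tau)$ in hand, its regularity is precisely the hypothesis on the process in Theorem \ref{main theorem}, so that theorem applies and shows $u$ satisfies the Stratonovich identity of item (3) $\mathbbm{P}$-a.s. in $X=W^{m-2,2}_\sigma$, which is the claim. Finally, when $N=2$ one takes $\tau:=T$ by invoking the global a priori bounds for two-dimensional Navier--Stokes: the same transport-noise cancellation renders the $W^{m,2}_\sigma$ estimate global in time, so the blow-up alternative characterising the maximal existence time is never triggered and $\tau$ may be chosen equal to $T$.
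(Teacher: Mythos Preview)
Your proposal is correct and follows essentially the same route as the paper: choose the quartet $W^{m+1,2}_\sigma\hookrightarrow W^{m,2}_\sigma\hookrightarrow W^{m-1,2}_\sigma\hookrightarrow W^{m-2,2}_\sigma$, invoke Lemma~\ref{linear lemma} so that the corrector is $\tfrac12\sum_i\mathcal{P}B_i(s,\mathcal{P}B_i(s,u_s))$, obtain a solution of the resulting It\^o equation from the existing well-posedness theory for stochastic Navier--Stokes with transport noise, and then apply Theorem~\ref{main theorem} to recover the Stratonovich identity in $W^{m-2,2}_\sigma$. Two small remarks: the It\^o identity that feeds into Theorem~\ref{main theorem} must hold in $U=W^{m-1,2}_\sigma$, not merely in $W^{m-2,2}_\sigma$ as you wrote (though the regularity you produce gives this automatically); and where you sketch the transport-noise cancellation in the $W^{m,2}_\sigma$ energy estimate, the paper instead simply cites the abstract criteria of [\cite{goodair2024weak}, \cite{goodair2022existence1}, \cite{goodair2024high}] and observes that their verification goes through verbatim once one replaces $\norm{\xi_i}$ by $\sup_{t}\norm{\xi_i(t)}$, which is summable by hypothesis.
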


\begin{proof}
As each $\mathcal{P}B_i(s,\cdot)$ is linear, then by applying Lemma \ref{linear lemma} for the spaces
$$V:= W^{m+1,2}_{\sigma}, \qquad H:= W^{m,2}_{\sigma}, \qquad  U:= W^{m-1,2}_{\sigma}, \qquad X:= W^{m-2,2}_{\sigma} $$
we see that to verify the Stratonovich identity, it is sufficient to satisfy the It\^{o} form
$$  u_t = u_0 - \int_0^{t \wedge \tau}\mathcal{P}\mathcal{L}_{u_s}u_s\ ds + \int_0^{t\wedge \tau} \Delta u_s\, ds + \frac{1}{2}\int_0^{t \wedge \tau} \sum_{i=1}^\infty \mathcal{P}B_i\left(s, \mathcal{P}B_i(s,u_s)\right)ds - \int_0^{t\wedge \tau} \mathcal{P}B(s,u_s) d\mathcal{W}_s$$
in $W^{m-1,2}_{\sigma}$. Note that in addition to their spatial smoothness and summability, each $\xi_i$ is continuously differentiable in time to facilitate Lemma \ref{linear lemma}. When the $(\xi_i)$ are time independent, global existence of strong solutions of the It\^{o} form in 2D was proven in [\cite{goodair2024weak}] Theorem 5.4, local existence of strong solutions in 3D in [\cite{goodair2022existence1}] also Theorem 5.4, and the propagation of regularity of these solutions in [\cite{goodair2024high}] Proposition 3.7. In fact in all cases, the results were deduced by applying an abstract criterion in each paper: therefore, one only needs to inspect how the introduction of time-dependence into the $(\xi_i)$ affects a verification of the assumptions in those papers. Identical bounds in terms of $\xi_i(t)$ are obtained, and due to the condition $\sum_{i=1}^\infty \sup_{t \in [0,T]}\left(  \norm{\xi_i(t)}_{W^{m+2,\infty}}^2\right) < \infty$ they hold verbatim and we conclude the proof.
\end{proof}

\subsection{Nonlinear Transport Noise} \label{subs nonlinear trans}

The purpose of this subsection is to elucidate computations regarding a nonlinear transport noise as motivated in the introduction. Given the many possible ways of incorporating such a noise into an equation, we cannot expect to capture all rigorous possibilities of application. Therefore, we provide only a formal result by computing the expression given in Theorem \ref{main theorem}; in any specific application, Theorem \ref{main theorem} provides the tools to make this rigorous.

\begin{lemma}
Consider $\sy$ on some domain $\mathscr{O} \subset \R^n$, $\sy: [0,T] \times \Omega \times \mathscr{O} \rightarrow \R^d$, understood as a function space valued process $\sy: [0,T] \times \Omega \rightarrow H$,  specified by the Stratonovich SPDE
    \begin{equation} \label{formal strato}
        \sy_{t} = \sy_0 + \int_0^{t} \mathcal{A}\left(s,\sy_s\right)ds + \sum_{i=1}^\infty\int_0^{t} \mathcal{L}_{\xi_i}F(\sy_s) \circ dW^i_s
    \end{equation}
    where $H \hookrightarrow U$ are Hilbert Spaces and $F: H \rightarrow U$ is continuous with sufficiently regular Fr\'{e}chet derivatives $D_hF$, $D_{hh}F$. Then (\ref{formal strato}) has corresponding It\^{o} form
\begin{equation} \label{formal ito}
        \sy_{t} = \sy_0 + \int_0^{t} \left(\mathcal{A}\left(s,\sy_s\right) + \frac{1}{2}\sum_{i=1}^\infty \mathcal{L}_{\xi_i}\left(D_hF(\sy_s)\left[ \mathcal{L}_{\xi_i}F(\sy_s) \right] \right) \right)ds + \sum_{i=1}^\infty\int_0^{t} \mathcal{L}_{\xi_i}F(\sy_s) dW^i_s.
    \end{equation}
Suppose that $F$ is defined by a differentiable function $f: \R^d \rightarrow \R^d$ through $F(\psi)(x) = f(\psi(x))$. Then (\ref{formal ito}) reduces to  
\begin{equation} \nonumber
        \sy_{t} = \sy_0 + \int_0^{t} \left(\mathcal{A}\left(s,\sy_s\right) + \frac{1}{2}\sum_{i=1}^\infty \mathcal{L}_{\xi_i}\left(f'(\sy_s)^2 \mathcal{L}_{\xi_i}(\sy_s)\right) \right)ds + \sum_{i=1}^\infty\int_0^{t}f'(\sy_s) \mathcal{L}_{\xi_i}(\sy_s) dW^i_s.
    \end{equation}
    
\end{lemma}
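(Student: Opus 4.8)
The plan is to apply Theorem \ref{main theorem} with the noise operator $\mathcal{G}_i(s,\psi) := \mathcal{L}_{\xi_i}F(\psi)$, which in this subsection is time-independent so that $\partial_t\mathcal{G}_i \equiv 0$, and then to unwind the abstract corrector $\frac{1}{2}\sum_i D_u\mathcal{G}_i(s,\sy_s)[\mathcal{G}_i(s,\sy_s)]$ using the composite structure $\mathcal{G}_i = \mathcal{L}_{\xi_i}\circ F$. First I would record the chain rule: since $\mathcal{L}_{\xi_i}$ acts as a bounded linear operator between the relevant function spaces and $F$ is Fr\'{e}chet differentiable, one has $D_u\mathcal{G}_i(\psi)[\phi] = \mathcal{L}_{\xi_i}\!\left(D_hF(\psi)[\phi]\right)$ for every admissible $\phi$ (composition of a continuous linear map with a Fr\'{e}chet-differentiable map). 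Substituting $\phi = \mathcal{G}_i(\psi) = \mathcal{L}_{\xi_i}F(\psi)$ produces precisely the summand $\mathcal{L}_{\xi_i}\!\left(D_hF(\sy_s)[\mathcal{L}_{\xi_i}F(\sy_s)]\right)$ appearing in (\ref{formal ito}), so Theorem \ref{main theorem} gives the first assertion directly.

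For the second assertion I would specialise to the composition operator $F(\psi)(x) = f(\psi(x))$ and establish two pointwise identities. The first is that the Fr\'{e}chet derivative of $F$ is multiplication by the Jacobian, $D_hF(\psi)[\phi](x) = f'(\psi(x))\,\phi(x)$, which is the standard differentiation of a Nemytskii operator under the assumed regularity of $f$. The second is the commutation relation $\mathcal{L}_{\xi_i}\!\left(f(\psi)\right) = f'(\psi)\,\mathcal{L}_{\xi_i}\psi$, obtained from the pointwise chain rule $\partial_j\!\left(f(\psi)\right) = f'(\psi)\,\partial_j\psi$ after multiplying by $\xi_i^j$ and summing over $j$. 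Combining these gives $\mathcal{L}_{\xi_i}F(\sy_s) = f'(\sy_s)\,\mathcal{L}_{\xi_i}\sy_s$ and hence $D_hF(\sy_s)\!\left[\mathcal{L}_{\xi_i}F(\sy_s)\right] = f'(\sy_s)^2\,\mathcal{L}_{\xi_i}\sy_s$; feeding this back into (\ref{formal ito}) yields the claimed reduced form for both the drift corrector and the noise term.

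Since the lemma is stated only formally, I would not attempt to verify the growth and summability hypotheses of Theorem \ref{main theorem} in the abstract; the genuine obstacle in any concrete realisation is exactly to exhibit a quartet of Hilbert spaces on which $F$ and its derivatives $D_hF$, $D_{hh}F$ are bounded on bounded sets and on which the operators $\mathcal{L}_{\xi_i}$ — each costing a derivative — still produce Hilbert--Schmidt noise coefficients with square-summable constants. Nemytskii operators are delicate between function spaces, so this is where the real work lies in applications; once such a framework is fixed, the two computations above are unchanged and Theorem \ref{main theorem} applies verbatim, which is the content I would emphasise as the takeaway of the proof.
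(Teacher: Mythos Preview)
Your proposal is correct and follows essentially the same route as the paper's own proof: apply Theorem~\ref{main theorem} with $\mathcal{G}_i=\mathcal{L}_{\xi_i}\circ F$, use the chain rule together with linearity of $\mathcal{L}_{\xi_i}$ to obtain the corrector $\mathcal{L}_{\xi_i}\!\left(D_hF(\sy_s)[\mathcal{L}_{\xi_i}F(\sy_s)]\right)$, and then specialise via $D_hF(\phi)[\psi]=f'(\phi)\psi$ and $\mathcal{L}_{\xi_i}(f(\psi))=f'(\psi)\mathcal{L}_{\xi_i}\psi$. Your additional remarks on the formal nature of the lemma and the need to fix a concrete quartet of spaces in applications are in line with the paper's own caveat preceding the statement.
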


\begin{proof}
    We verify the It\^{o}-Stratonovich corrector, given from Theorem \ref{main theorem}, for each $i$ in the summand, by $D_h\left( \mathcal{L}_{\xi_i}F\right)(\sy_s)\left[\mathcal{L}_{\xi_i}F(\sy_s) \right]$. By the chain rule for Fr\'{e}chet derivatives, this is
$$D_h\mathcal{L}_{\xi_i}\left(F(\sy_s)\right)\left[D_hF(\sy_s)\left[\mathcal{L}_{\xi_i}F(\sy_s) \right] \right] $$
    and as $\mathcal{L}_{\xi_i}$ is linear, it further simplifies to
$$\mathcal{L}_{\xi_i}\left(D_hF(\sy_s)\left[\mathcal{L}_{\xi_i}F(\sy_s) \right] \right) $$
    as required. The second part of the lemma follows from the facts that $D_hF(\phi)[\psi] = f'(\phi)\psi$ and $\mathcal{L}_{\xi_i}\left(f(\psi)\right) = f'(\psi)\mathcal{L}_{\xi_i}\psi$.
\end{proof}

The second part of the lemma recovers the conversion given in [\cite{flandoli20242d}] page 6, where it is further shown that if $\xi_i$ is divergence-free and $d=1$ then
$$\mathcal{L}_{\xi_i}\left(f'(\sy_s)^2 \mathcal{L}_{\xi_i}(\sy_s)\right) = \textnormal{div}\left(f'(\sy_s)^2(\xi_i \otimes \xi_i)\nabla \sy_s \right). $$


\addcontentsline{toc}{section}{References}
\bibliographystyle{newthing}
\bibliography{myBibby}

\begin{thebibliography}{10}
\providecommand{\url}[1]{{#1}}
\providecommand{\urlprefix}{URL }
\expandafter\ifx\csname urlstyle\endcsname\relax
  \providecommand{\doi}[1]{DOI~\discretionary{}{}{}#1}\else
  \providecommand{\doi}{DOI~\discretionary{}{}{}\begingroup \urlstyle{rm}\Url}\fi

\bibitem{alonso2020well}
Alonso-Or{\'a}n, D., Bethencourt~de Le{\'o}n, A.: On the well-posedness of stochastic Boussinesq equations with transport noise.
\newblock Journal of Nonlinear Science \textbf{30}(1), 175--224 (2020)

\bibitem{butori2024ito}
Butori, F., Flandoli, F., Luongo, E.: On the It{\^o}-Stratonovich diffusion limit for the magnetic field in a 3D thin domain.
\newblock arXiv preprint arXiv:2401.15701  (2024)

\bibitem{butori2024mean}
Butori, F., Luongo, E.: Mean-Field Magnetohydrodynamics Models as Scaling Limits of Stochastic Induction Equations.
\newblock arXiv preprint arXiv:2406.07206  (2024)

\bibitem{crisan2019solution}
Crisan, D., Flandoli, F., Holm, D.D.: Solution properties of a 3D stochastic Euler fluid equation.
\newblock Journal of Nonlinear Science \textbf{29}(3), 813--870 (2019)

\bibitem{curtain1970ito}
Curtain, R.F., Falb, P.L.: Ito's lemma in infinite dimensions.
\newblock Journal of mathematical analysis and applications \textbf{31}(2), 434--448 (1970)

\bibitem{debussche2024second}
Debussche, A., Pappalettera, U.: Second order perturbation theory of two-scale systems in fluid dynamics.
\newblock Journal of the European Mathematical Society  (2024)

\bibitem{duan2014effective}
Duan, J., Wang, W.: Effective dynamics of stochastic partial differential equations.
\newblock Elsevier (2014)

\bibitem{flandoli2021delayed}
Flandoli, F., Galeati, L., Luo, D.: Delayed blow-up by transport noise.
\newblock Communications in Partial Differential Equations \textbf{46}(9), 1757--1788 (2021)

\bibitem{flandoli2021high}
Flandoli, F., Luo, D.: High mode transport noise improves vorticity blow-up control in 3D Navier--Stokes equations.
\newblock Probability Theory and Related Fields \textbf{180}(1), 309--363 (2021)

\bibitem{flandoli20242d}
Flandoli, F., Luo, D., Luongo, E.: 2D Smagorinsky-type large eddy models as limits of stochastic PDEs.
\newblock Journal of Nonlinear Science \textbf{34}(3), 54 (2024)

\bibitem{flandoli20212d}
Flandoli, F., Pappalettera, U.: 2D Euler equations with Stratonovich transport noise as a large-scale stochastic model reduction.
\newblock Journal of Nonlinear Science \textbf{31}(1), 1--38 (2021)

\bibitem{galeati2020convergence}
Galeati, L.: On the convergence of stochastic transport equations to a deterministic parabolic one.
\newblock Stochastics and Partial Differential Equations: Analysis and Computations \textbf{8}(4), 833--868 (2020)

\bibitem{galeati2023weak}
Galeati, L., Luo, D.: Weak well-posedness by transport noise for a class of 2D fluid dynamics equations.
\newblock arXiv preprint arXiv:2305.08761  (2023)

\bibitem{gawarecki2010stochastic}
Gawarecki, L., Mandrekar, V.: Stochastic differential equations in infinite dimensions: with applications to stochastic partial differential equations.
\newblock Springer Science \& Business Media (2010)

\bibitem{goodair2024high}
Goodair, D.: High Order Smoothness for Stochastic Navier-Stokes Equations with Transport and Stretching Noise on Bounded Domains.
\newblock arXiv preprint arXiv:2408.13791  (2024)

\bibitem{goodair2024weak}
Goodair, D.: Weak and strong solutions to nonlinear SPDEs with unbounded noise.
\newblock Nonlinear Differential Equations and Applications NoDEA \textbf{31}(6), 106 (2024)

\bibitem{goodair2024stochastic}
Goodair, D., Crisan, D.: Stochastic Calculus in Infinite Dimensions and SPDEs.
\newblock Springer Nature (2024)

\bibitem{goodair2022existence1}
Goodair, D., Crisan, D., Lang, O.: Existence and uniqueness of maximal solutions to SPDEs with applications to viscous fluid equations.
\newblock Stochastics and Partial Differential Equations: Analysis and Computations pp. 1--64 (2023)

\bibitem{holm2015variational}
Holm, D.D.: Variational principles for stochastic fluid dynamics.
\newblock Proceedings of the Royal Society A: Mathematical, Physical and Engineering Sciences \textbf{471}(2176), 20140,963 (2015)

\bibitem{kraichnan1968small}
Kraichnan, R.H.: Small-scale structure of a scalar field convected by turbulence.
\newblock The Physics of Fluids \textbf{11}(5), 945--953 (1968)

\bibitem{lototsky2017stochastic}
Lototsky, S.V., Rozovsky, B.L., et~al.: Stochastic partial differential equations.
\newblock Springer (2017)

\bibitem{memin2014fluid}
M{\'e}min, E.: Fluid flow dynamics under location uncertainty.
\newblock Geophysical \& Astrophysical Fluid Dynamics \textbf{108}(2), 119--146 (2014)

\bibitem{robinson2016three}
Robinson, J.C., Rodrigo, J.L., Sadowski, W.: The three-dimensional Navier--Stokes equations: Classical theory, vol. 157.
\newblock Cambridge university press (2016)

\bibitem{shi2022global}
Shi, H., Jin, F.F., Wills, R.C., Jacox, M.G., Amaya, D.J., Black, B.A., Rykaczewski, R.R., Bograd, S.J., Garc{\'\i}a-Reyes, M., Sydeman, W.J.: Global decline in ocean memory over the 21st century.
\newblock Science Advances \textbf{8}(18), eabm3468 (2022)

\bibitem{twardowska2004relation}
Twardowska, K., Nowak, A.: On the relation between the It{\^o} and Stratonovich integrals in Hilbert spaces.
\newblock in: Annales Mathematicae Silesianae, vol.~18, pp. 49--63 (2004)

\bibitem{woods1981memory}
Woods, J.: The memory of the ocean.
\newblock in: Climatic Variations and Variability: Facts and Theories: NATO Advanced Study Institute First Course of the International School of Climatology, Ettore Majorana Center for Scientific Culture, Erice, Italy, March 9--21, 1980, pp. 63--83. Springer (1981)

\end{thebibliography}

\end{document}